\newcolumntype{M}[1]{>{\centering\arraybackslash}m{#1}} %define dimension for long stable
\DeclareFontFamily{OMS}{rsfs}{\skewchar\font'60}
\DeclareFontShape{OMS}{rsfs}{m}{n}{<-5>rsfs5 <5-7>rsfs7 <7->rsfs10 }{}
\DeclareSymbolFont{rsfs}{OMS}{rsfs}{m}{n}
\DeclareSymbolFontAlphabet{\scr}{rsfs}
\DeclareSymbolFontAlphabet{\scr}{rsfs}
\newcommand\cF{{\mathcal F}}
\theoremstyle{plain}
\newtheorem{thm}{Theorem}[section]
\newtheorem{lemma}[thm]{Lemma}
\newtheorem{prop}[thm]{Proposition}
\newtheorem{defn}[thm]{Definition}
\newtheorem{claim}[thm]{Claim}
\theoremstyle{remark}
\newtheorem{remark}[thm]{Remark}
\def\dim{\operatorname{dim}}
\def\hor{\operatorname{\hor}}
\def\ver{\operatorname{\ver}}
\def\sing{\operatorname{\textsubscript{\rm sing}}}
\def\orb{\mathrm{orb}}
\def\ver{\operatorname{\textsubscript{\rm ver}}}
\def\hor{\operatorname{\textsubscript{\rm hor}}}
\def\orb{\mathrm{orb}}
\setlist[itemize]{leftmargin=*}
\setlist[enumerate]{leftmargin=*}
\numberwithin{equation}{section} %numbering of equations
\title{Title} 
\subjclass[2010]{}
\keywords{}
\author{Wenhao Ou}
\address{Wenhao Ou, Institute of Mathematics, Academy of Mathematics and Systems Science, Chinese Academy of Sciences, Beijing, 100190, China}
\email{wenhaoou@amss.ac.cn}
\begin{document}

\begin{abstract}
Assume that $X$ is a compact complex analytic variety which has quotient singularities in codimension 2, and that $\mathcal{F}$ is a reflexive sheaf on $X$.  
Using orbifold modifications, we can define first and second homological Chern classes for $\mathcal{F}$. 
If in addition   $X$ has a K\"ahler form $\omega$  and $\mathcal{F}$ is $\omega$-stable, then we deduce Bogomolov-Gieseker inequality on the orbifold Chern classes of $\mathcal{F}$.  
\end{abstract}

\title{Orbifold Chern classes and Bogomolov-Gieseker inequalities}

\maketitle

\tableofcontents

%\vspace{-0.2cm} 

\section{Introduction}

The theory of  holomorphic vector bundles is a central object in complex algebraic geometry and complex analytic geometry. 
The notion of stable vector bundles on complete curves was introduced by Mumford  in  \cite{Mumford1963}. 
Such notion of stability was then extended to torsion-free sheaves on any projective manifolds (see \cite{Takemoto1972}, \cite{Gieseker1977}), and is now known as the slope stability. 
An  important property of stable vector bundles is the following Bogomolov-Gieseker inequality, involving the Chern classes of the vector  bundle.
\begin{thm}
\label{thm:BG-inequality-intro}
Let $X$ be a projective manifold of dimension $n$,   let  $H$ be  an ample divisor, and let $\cF$ be a $H$-stable vector bundle of rank $r$ on $X$. 
Then 
\[   \Big(c_2(\cF)-\frac{r-1}{2r}c_1(\cF)^2 \Big)  \cdot  H^{n-2} \ge  0. \]
\end{thm}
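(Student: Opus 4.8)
The plan is to deduce Theorem~\ref{thm:BG-inequality-intro} from the Hermitian--Einstein correspondence together with Lübke's pointwise inequality for Hermitian--Einstein metrics, handling the fact that $\cF$ is only a vector bundle (not assumed to admit an obvious metric a priori) via the Donaldson--Uhlenbeck--Yau theorem. First I would recall that, since $\cF$ is $H$-stable, by the theorem of Donaldson (for surfaces and projective manifolds) and Uhlenbeck--Yau there exists a Hermitian metric $h$ on $\cF$ whose Chern connection is Hermitian--Einstein with respect to the Kähler form $\omega$ representing $H$; that is, the curvature $F_h$ satisfies $i\Lambda_\omega F_h = \lambda\cdot \id_\cF$ for a constant $\lambda$ determined by $\deg_H(\cF)$ and $\vol(X)$. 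This reduces the global statement to a pointwise curvature estimate.

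The key step is then Lübke's inequality: for any Hermitian--Einstein metric, the Chern--Weil representatives satisfy, as real $(2,2)$-forms,
\[
\Big(2r\, c_2(\cF,h) - (r-1)\, c_1(\cF,h)^2\Big)\wedge \omega^{n-2} \ge 0
\]
pointwise on $X$, where the inequality is of smooth measures (nonnegative volume forms). This is a purely linear-algebra computation on the curvature endomorphism, exploiting that $i\Lambda_\omega F_h$ is a multiple of the identity so that the "trace-free primitive part" of the curvature is the only contribution, and that $|A|^2 \ge 0$ for the relevant Hermitian form built from the curvature tensor. Integrating this pointwise inequality over $X$ against $\omega^{n-2}$, and using that $c_i(\cF,h)$ represent the topological Chern classes $c_i(\cF)$ while $\int_X \omega^{n-2}\wedge(\,\cdot\,)$ computes the intersection with $H^{n-2}$, yields
\[
\Big(2r\, c_2(\cF) - (r-1)\, c_1(\cF)^2\Big)\cdot H^{n-2} \ge 0,
\]
which is the asserted inequality after dividing by $2r$.

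The main obstacle is establishing the existence of the Hermitian--Einstein metric in the form needed: strictly speaking Donaldson's result is for algebraic surfaces, and one must either invoke the Uhlenbeck--Yau theorem on general compact Kähler manifolds or reduce to the surface case by a Mehta--Ramanathan-type argument — restricting $\cF$ to a general complete intersection surface $S \in |mH|^{n-2}$, on which stability is preserved for $m\gg 0$, proving the inequality there, and observing that both $c_2(\cF)\cdot H^{n-2}$ and $c_1(\cF)^2\cdot H^{n-2}$ are computed by restriction to $S$ via the projection formula. I would likely present the latter route, as it keeps the analytic input confined to surfaces; the technical heart is then the Bogomolov inequality on a surface, which itself can be proven either analytically (as above) or algebraically via Bogomolov's argument on instability and the Harder--Narasimhan filtration of symmetric powers. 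I expect the cleanest exposition uses the surface reduction plus the analytic Lübke inequality on $S$.
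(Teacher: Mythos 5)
Your proposal is correct and follows essentially the same route the paper indicates for this classical statement: reduce to the surface case by restricting to a general complete intersection surface in $|mH|^{n-2}$ (Mehta--Ramanathan, with the projection formula handling the harmless factor $m^{n-2}$), and then invoke the Bogomolov--Gieseker inequality on surfaces. The only cosmetic difference is that for the surface case you prefer the analytic input (Donaldson's existence of a Hermitian--Einstein metric plus L\"ubke's pointwise inequality) where the paper points to Bogomolov's original algebraic argument; both are standard and either completes the proof.
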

When  $X$ is a surface, the inequality was proved in \cite{Bogomolov1978}. 
In higher dimensions, one may apply Mehta-Ramanathan   theorem in  \cite{MehtaRamanathan1981/82}   to  reduce to the case of surfaces, by taking hyperplane  sections. 
Later in \cite{Kawamata1992}, as a part of the proof for the three-dimensional abundance theorem, Kawamata extended the inequality to orbifold Chern  classes of  reflexive sheaves on projective surfaces with quotient singularities. 
The technique of taking hypersurface sections then  allows us to deduce the Bogomolov-Gieseker inequality for reflexive sheaves on projective varieties which have quotient singularities in codimension 2.

On the analytic side, let $(X,\omega)$ be a compact  K\"ahler manifold, and $(\cF, h)$ a Hermitian  holomorphic vector bundle on $X$.  
L\"ubke proved that  if $h$ satisfies the Einstein condition, then the following inequality holds (see \cite{Lub82}),
\[   \int_X \Big(c_2(\cF,h)-\frac{r-1}{2r}c_1(\cF,h)^2 \Big)  \wedge  \omega^{n-2} \ge 0. \] 
It is now well understood that if $\cF$ is slope stable, then it admits a Hermitian-Einstein metric. 
The case when $X$ is a complete curve was proved by Narasimhan-Seshadri  in \cite{NarasimhanSeshadri1965}, the case of projective surfaces was proved by Donaldson in \cite{Donaldson1985}, and the case of arbitrary compact K\"ahler manifolds was proved by Uhlenbeck-Yau in \cite{UhlenbeckYau1986}.   
%An advantage of the analytic method is to give an insight to the equality condition. 
Simpson extended the existence of  Hermitian-Einstein  metric to stable Higgs bundles, on compact and certain non compact K\"ahler manifolds, see \cite{Simpson1988}. 
Furthermore,  in \cite{BandoSiu1994},  Bando-Siu introduced the notion of admissible metrics and proved the existence of admissible  Hermitian-Einstein  metrics on stable reflexive sheaves. 

Comparing with the algebraic version, it is natural to expect a Bogomolov-Gieseker  type inequality,  for  stable coherent reflexive sheaves  on a  compact K\"ahler variety, 
which has at most quotient singularities  in codimension 2, see for example \cite{CHP23}.  
When the underlying space has quotient singularities only, for example when it is a surface, an orbifold version of Donaldson-Uhlenbeck-Yau theorem was   proved by Faulk in \cite{Faulk2022}.   
As a consequence, a Bogomolov-Gieseker type inequality holds in this case. 
However, for a general K\"ahler variety, we are not able to take hyperplane sections. 
So the algebraic method does not apply.  
When the underlying space is smooth in codimension 2, 
Bogomolov-Gieseker type inequalities have also been established, 
see for example \cite{CHP16}, \cite{Chen2022}, \cite{ChenWentworth2024} and \cite{Wu21}.  
In general, it was suggested in \cite{CGNPPW} that the existence of orbifold modifications would imply the inequality.  

Such   orbifold modifications were recently proved  in  \cite{KollarOu2025}. 
In this note, we will use them to define (homological) orbifold Chern classes $\hat{c}_\bullet(\mathcal{E})$ for reflexive sheaves $\mathcal{E}$,  and prove the following orbifold Bogomolov-Gieseker  inequality.

\begin{thm}
\label{thm:BG} 
Let $(X,\omega)$ be a compact K\"ahler  variety of dimension $n \ge 2$, 
which has quotient singularities in codimension 2. 
Let $\mathcal{F}$ be a  $\omega$-stable reflexive coherent sheaf of rank $r\ge 2$ on $X$. 
Then we have the following inequality on orbifold Chern classes of $\mathcal{F}$,
\[
\Big( 2r \hat{c}_2(\mathcal{F}) - (r-1) \hat{c}_1(\mathcal{F})^2    \Big) \cdot [\omega]^{n-2} \ge 0.
\]
\end{thm}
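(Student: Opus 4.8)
The plan is to reduce the inequality, by means of an orbifold modification, to the Bogomolov--Gieseker inequality that is already available for stable reflexive sheaves on a compact K\"ahler $V$-manifold. So I would first fix an orbifold modification $\pi\colon X'\to X$ as provided by \cite{KollarOu2025}: a projective bimeromorphic morphism from a compact complex space $X'$ having only quotient singularities, which is an isomorphism over the open locus $X^{\circ}\subseteq X$ along which $X$ already has quotient singularities. In particular $\pi$ is an isomorphism away from an analytic subset of codimension $\ge 3$, and $X'$ is again K\"ahler. Put $\cF'\defeq(\pi^{*}\cF)^{**}$, a reflexive coherent sheaf of rank $r$ on $X'$; since $X'$ is a $V$-manifold, $\cF'$ is locally free outside a set of codimension $\ge 3$, every Weil divisor on $X'$ is $\bbQ$-Cartier, and $\cF'$ carries genuine orbifold Chern classes $c_{1}(\cF'),c_{2}(\cF')$. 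Unwinding the definition of $\hat c_{\bullet}$ recalled above, applied with this $\pi$, together with the projection formula, one gets
\[
\hat c_{2}(\cF)\cdot[\omega]^{n-2}=\deg\big(c_{2}(\cF')\cdot\pi^{*}[\omega]^{n-2}\big),\qquad
\hat c_{1}(\cF)^{2}\cdot[\omega]^{n-2}=\deg\big(c_{1}(\cF')^{2}\cdot\pi^{*}[\omega]^{n-2}\big),
\]
so it is enough to prove $\big(2r\,c_{2}(\cF')-(r-1)\,c_{1}(\cF')^{2}\big)\cdot\pi^{*}[\omega]^{n-2}\ge 0$ on $X'$.

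Second, I would transfer stability to $X'$. Since $\pi$ is an isomorphism over $X^{\circ}$ and its complement has codimension $\ge 3$, one has $\pi_{*}\cF'=\cF$; and for any coherent subsheaf $\cG'\subset\cF'$ with $0<\rk\cG'<r$, the pushforward $\pi_{*}\cG'$ is a subsheaf of $\cF$ of the same rank, with $\pi_{*}[\det\cG']=[\det\pi_{*}\cG']$ as Weil divisor classes. The projection formula then gives $\mu_{\pi^{*}\omega}(\cG')=\mu_{\omega}(\pi_{*}\cG')<\mu_{\omega}(\cF)=\mu_{\pi^{*}\omega}(\cF')$, the strict inequality being the $\omega$-stability of $\cF$; thus the slope inequalities hold for the nef and big class $\pi^{*}\omega$ itself. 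To upgrade this to a genuine K\"ahler polarization, fix a K\"ahler form $\eta$ on $X'$ and set $\omega'_{\varepsilon}\defeq\pi^{*}\omega+\varepsilon\eta$, which is K\"ahler for $\varepsilon>0$. The subsheaves of $\cF'$ that might fail $\omega'_{\varepsilon}$-semistability for small $\varepsilon$ form a bounded family (a standard boundedness property of coherent sheaves of bounded slope), so only finitely many values of $\mu_{\pi^{*}\omega}$ occur among them and the strict inequality above is uniform; hence $\cF'$ is $\omega'_{\varepsilon}$-stable for all sufficiently small $\varepsilon>0$. Alternatively, one could attempt to run the Hermitian--Einstein theory directly for the class $\pi^{*}\omega$ and bypass this perturbation.

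Finally, $X'$ being a compact K\"ahler $V$-manifold and $\cF'$ being $\omega'_{\varepsilon}$-stable reflexive, I would invoke the orbifold Donaldson--Uhlenbeck--Yau theorem on $V$-manifolds (Faulk \cite{Faulk2022}, in its reflexive form via Bando--Siu admissible metrics \cite{BandoSiu1994}) to put an admissible orbifold Hermitian--Einstein metric on $\cF'$, and then the orbifold L\"ubke inequality \cite{Lub82} to obtain
\[
\big(2r\,c_{2}(\cF')-(r-1)\,c_{1}(\cF')^{2}\big)\cdot[\omega'_{\varepsilon}]^{n-2}\ge 0 .
\]
Letting $\varepsilon\to 0$ replaces $[\omega'_{\varepsilon}]^{n-2}$ by $\pi^{*}[\omega]^{n-2}$, and by the identities of the first step the resulting nonnegative number is exactly $\big(2r\,\hat c_{2}(\cF)-(r-1)\,\hat c_{1}(\cF)^{2}\big)\cdot[\omega]^{n-2}$, which proves the theorem. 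I expect the main obstacle to lie in this last step: ensuring that the orbifold Hermitian--Einstein correspondence and the L\"ubke inequality hold for \emph{reflexive} sheaves on a $V$-manifold and are compatible with the \emph{homological} orbifold Chern classes $\hat c_{\bullet}$ constructed above --- that is, that the Chern--Weil integrals of an admissible orbifold Hermitian--Einstein metric converge and represent $c_{1}(\cF')^{2}\cdot\pi^{*}[\omega]^{n-2}$ and $c_{2}(\cF')\cdot\pi^{*}[\omega]^{n-2}$. The perturbation-and-boundedness point in the stability step is a comparatively routine technicality.
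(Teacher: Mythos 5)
Your overall strategy (orbifold modification, transfer of stability, perturbation $\pi^*\omega+\varepsilon\eta$, orbifold Donaldson--Uhlenbeck--Yau plus L\"ubke, then $\varepsilon\to 0$) matches the paper's, and your stability step is essentially the paper's Claim \ref{claim:stable} (the paper makes the finiteness of slopes precise via \cite[Lemma 5.7]{Toma2021}). However, there is a genuine gap at exactly the point you flag as ``the main obstacle,'' and the paper's proof contains an extra step specifically designed to avoid it. You stop at the $V$-manifold $X'$ with the \emph{reflexive} sheaf $\cF'=(\pi^*\cF)^{**}$ and propose to apply an orbifold Hermitian--Einstein correspondence ``in its reflexive form via Bando--Siu admissible metrics.'' But the cited result \cite[Theorem 1]{Faulk2022} is for orbifold \emph{vector bundles}; a Bando--Siu-type extension to reflexive orbifold sheaves, together with the statement that the resulting Chern--Weil integrals compute the homological classes $\hat c_\bullet$ of Definition \ref{defn:Chern}, is not available off the shelf and is not something you supply. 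Relatedly, your first displayed identities presuppose orbifold Chern classes $c_1(\cF'), c_2(\cF')$ for a reflexive, non-locally-free orbifold sheaf on a $V$-manifold, which the paper never defines: Definition \ref{defn:Chern} only assigns classes after a further modification.

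That further modification is the missing idea. The paper applies the functorial resolution \cite[Theorem 3.10]{DasOu2023} on each orbifold chart $Y_i$ of $Y=X'$, producing $G_i$-equivariant maps $p_i\colon Z_i\to Y_i$ with $p_i^*\mathcal{E}_i/(\mathrm{torsion})$ \emph{locally free} and indeterminacy loci of codimension $\ge 3$; these glue to a new orbifold $\mathfrak{Z}$ carrying an honest orbifold vector bundle $\mathcal{H}_{\orb}$. This is simultaneously (i) how $\hat c_1,\hat c_2,\hat c_1^2$ are defined (so no compatibility issue arises --- Lemma \ref{lemma:c2-property} and Proposition \ref{prop:Chern} guarantee independence of the choice), and (ii) what makes Faulk's theorem directly applicable, since one now has a genuine stable orbifold bundle with respect to $q^*\omega+\varepsilon\eta$. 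To repair your argument you would either need to insert this second modification (after which your proof becomes the paper's), or prove the reflexive orbifold Donaldson--Uhlenbeck--Yau statement and its Chern--Weil compatibility with $\hat c_\bullet$ yourself, which is a substantial piece of analysis rather than a citation.
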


We  refer to \cite{GuenanciaPaun2024} for a pure analytic approach to Bogomolov-Gieseker inequalities when $X$ is a klt threefold.   
The inequality of Theorem \ref{thm:BG} plays an important role in the abundance theorem for compact K\"ahler threefolds (see \cite{CHP23}, \cite{DasOu2023} and \cite{GuenanciaPaun2024}).  
We also remark that it is stronger than the one obtained by taking a desingularization of $X$. 
For example, when $X$ is a projective surface with quotient singularities, 
such a comparison was shown in \cite[Proposition 2.4.(6) and Theorem 5.1]{Langer2000}.

\vspace{2mm}

\noindent\textbf{Acknowledgment.} 
The author is grateful to Omprokash Das and J\'anos Koll\'ar for helpful conversations.
The author is supported by the National Key R\&D Program of China (No. 2021YFA1002300).

\section{Homological Orbifold Chern classes}

A complex orbifold $\mathfrak{X}$ with quotient space $X$ is defined by the following data. 
There is an open covering $\{U_i\}$ of $X$, there are complex manifolds $V_i$, there are  finite groups $G_i$ acting holomorphically on $V_i$, such that $V_i/G_i \cong U_i$. 
We require further that the $(V_i,G_i)$ are compatible along the overlaps.  
The orbifold structure $\mathfrak{X}$ is called standard if the actions of $G_i$ are free in codimension 1. 
For more details, we refer to, for example,   \cite[Section 3.1]{DasOu2023}.  %or \cite[Section 2.C]{Ou2024}.  
Throughout this paper, we always assume that the actions of $G_i$ are faithful.

Assume that  $X$ is a compact complex analytic variety of dimension $n$. 
Then there is a natural isomorphism  $H_{2n}(X,\mathbb{R}) \cong H_{2n}^{BM}(X,\mathbb{R})$, where $H_{2n}$ is the singular homology and $H_{2n}^{BM}$ is the Borel-Moore homology. 
We note that  $H_{2n}^{BM}(X,\mathbb{R}) \cong \mathbb{R}$ and 
it  is generated by a canonical element $[X]$, called the fundamental class of $X$.  
For more details on Borel-Moore homology, we refer to \cite[Chapter 19]{Fulton} and the references therein. 
As a consequence, for  cohomology classes $\sigma_1,...,\sigma_k \in H^{\bullet}(X, \mathbb{R})$, such that the sum of their degrees is equal to $2n$, we can define the following intersection number,
\[
\sigma_1  \cdots\sigma_k   :=  (\sigma_1 \smallsmile \cdots  \smallsmile  \sigma_k) \smallfrown [X] \in \mathbb{R}.
\]
When $X$ has only quotient singularities, this intersection form can be identified with the orbifold Poincar\'e duality, see \cite[Theorem 3]{Satake1956}.

Assume that  $X$ is a compact complex analytic variety of dimension $n$,  
which has quotient singularities in codimension 2.   
By an orbifold modification of $X$, we refer to a projective bimeromorphic morphism $f\colon Y\to X$ such that the indeterminacy locus of $f^{-1}$ has codimension at least 3 in $X$. 
Such an orbifold  modification always exists thanks to \cite[Theorem 1]{KollarOu2025}.  
There is a Zariski open subset $X^\circ $ of $X$, whose complement has codimension at least 3, such that there is a standard complex  orbifold $\mathfrak{X} = (X_j,G'_j)$ whose quotient space is $X^\circ$. 
Being standard means that the action of $G'_j$ is free in codimension 1.    
Let $\mathcal{F}$ be a coherent reflexive sheaf on $X$.  
Since reflexive sheaves on complex manifolds are locally free in codimension 2,  up to shrinking $X^\circ$, we can assume that $\mathcal{F}$ induces an orbifold vector bundle $\mathcal{F}_{\orb}$ on $\mathfrak{X}$, see \cite[Remark 3.4.(5)]{DasOu2023}.

Let $f\colon Y \to X$ be an orbifold modification, and let $ \mathfrak{Y}=(Y_i,G_i) $ be   the standard orbifold structure on $Y$, 
and let $\mathcal{E} = (f^*\mathcal{F})/(\mathrm{torsion})$.  
Then $\mathcal{E}$ induces a  reflexive  coherent orbifold sheaf $\mathcal{E}_{\orb}$ on $\mathfrak{Y}$,  so that on each orbifold chart $\pi_i\colon Y_i\to Y$, we have  $\mathcal{E}_i=(\pi_i^*(\mathcal{E}))^{**}$.   
By shrinking $X^\circ$, we may assume that $f$ is an isomorphism over it.

By \cite[Theorem 3.10]{DasOu2023}, there is a functorial projective bimeromorphic morphism $p_i\colon Z_i\to Y_i$, 
such that $Z_i$ is smooth, that $\mathcal{H}_i:= p_i^*\mathcal{E}_i/(\mathrm{torsion})$ is locally free, and that the indeterminacy locus of $p_i^{-1}$ has codimension at least 3. 
The functoriality implies that there is a canonical action of $G_i$ on $Z_i$ such that $p_i$ is $G_i$-equivariant. 
Furthermore, the  $(Z_i,G_i)$'s induce  a complex orbifold $\mathfrak{Z}$ with quotient space $Z$. 
The collection $(\mathcal{H}_i)$ defines an orbifold vector bundle $\mathcal{H}_{\orb}$  on $\mathfrak{Z}$.   
Therefore,  we have well-defined orbifold Chern classes $\hat{c}_1(\mathcal{H}_{\orb})$ and $\hat{c}_2(\mathcal{H}_{\orb})$ in $H^{\bullet}(Z,\mathbb{R})$, see  \cite[Section 2]{Bla96} or \cite[Definition 3.3]{DasOu2023}.
Let $q\colon Z\to X$ be the natural morphism.  
Then the indeterminacy locus  of $q^{-1}$ has codimension at least 3. 
We can now define the (homological) orbifold Chern classes $\hat{c}_1(\mathcal{F})$, $\hat{c}_2(\mathcal{F})$ and $\hat{c}_1(\mathcal{F})^2$.  

\begin{defn}
\label{defn:Chern} 
The orbifold Chern class $\hat{c}_1(\mathcal{F})$, $\hat{c}_2(\mathcal{F})$ and $\hat{c}_1(\mathcal{F})^2$ are defined as   linear forms on $H^{\bullet}(X,\mathbb{R})$, so that  
for any class $\sigma \in H^{\bullet}(X,\mathbb{R})$,  we have 
\begin{eqnarray*}
    \hat{c}_k(\mathcal{F}) \cdot \sigma = \hat{c}_k(\mathcal{H}_{\orb}) \cdot q^*\sigma &  \mbox{ and }  &  
\hat{c}_1(\mathcal{F})^2 \cdot \sigma = \hat{c}_1(\mathcal{H}_{\orb})^2 \cdot q^*\sigma, 
\end{eqnarray*}
where $k=1,2$. 
\end{defn}

To see that these classes are well-defined, it is equivalent to show that the intersection numbers are independent of the choice of the modification $q\colon Z\to X$.  
We first observe the following fact. 

\begin{lemma}
\label{lemma:c2-property}
Let  $X$ be compact complex analytic variety of  dimension $n$ and let $\rho \colon \widetilde{X} \to X$ be a proper bimeromorphic morphism.  
Let $\alpha_1,\alpha_2\in H^{2k}(\widetilde{X} ,\mathbb{R})$.  
Assume that there is a closed analytic subset $V\subseteq X$ of codimension at least $k+1$, such that 
$\alpha_1|_{\widetilde{X}^\circ} =  \alpha_2|_{\widetilde{X}^\circ}$, 
where  ${\widetilde{X}^\circ} =  \widetilde{X} \setminus \rho^{-1}(V)$.  
Then  for any class $\sigma \in H^{2n-2k}(X, \mathbb{R})$, we have 
\[
\alpha_1 \cdot \rho^* \sigma = \alpha_2 \cdot \rho^*\sigma. 
\]
\end{lemma}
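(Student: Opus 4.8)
The plan is to reduce at once to the class $\beta := \alpha_1 - \alpha_2 \in H^{2k}(\widetilde X, \mathbb{R})$ and to show that $\beta \cdot \rho^*\sigma = 0$; the statement then follows by bilinearity of the intersection pairing. Set $Z := \rho^{-1}(V)$, a closed analytic subset of $\widetilde X$, and $\widetilde X^\circ := \widetilde X \setminus Z$. The hypothesis says precisely that $\beta|_{\widetilde X^\circ} = 0$, so from the long exact sequence of the pair $(\widetilde X, \widetilde X^\circ)$ one may fix a lift $\widehat\beta \in H^{2k}(\widetilde X, \widetilde X^\circ; \mathbb{R})$ of $\beta$ along the natural map $H^{2k}(\widetilde X, \widetilde X^\circ;\mathbb{R}) \to H^{2k}(\widetilde X, \mathbb{R})$.

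Next I would transport the whole computation onto $Z$. Using the cap product with supports
\[
H^{2k}(\widetilde X, \widetilde X^\circ; \mathbb{R}) \otimes H^{BM}_{2n}(\widetilde X, \mathbb{R}) \longrightarrow H^{BM}_{2n-2k}(Z, \mathbb{R}),
\]
set $\xi := \widehat\beta \smallfrown [\widetilde X] \in H^{BM}_{2n-2k}(Z, \mathbb{R})$. By definition the number $\beta \cdot \rho^*\sigma$ is the image of $(\beta \smallsmile \rho^*\sigma) \smallfrown [\widetilde X] \in H^{BM}_0(\widetilde X, \mathbb{R}) \cong \mathbb{R}$; since $\beta$ is the image of $\widehat\beta$, this class is the image of $(\widehat\beta \smallsmile \rho^*\sigma) \smallfrown [\widetilde X] \in H^{BM}_0(Z, \mathbb{R})$ under the pushforward $H^{BM}_0(Z,\mathbb{R}) \to H^{BM}_0(\widetilde X, \mathbb{R})$, and the standard identities relating cup and cap products (the $H^\bullet(Z)$-module structure on $H^{BM}_\bullet(Z)$, compatible with supports) give
\[
(\widehat\beta \smallsmile \rho^*\sigma) \smallfrown [\widetilde X] = (\rho^*\sigma)|_Z \smallfrown \xi \in H^{BM}_0(Z, \mathbb{R}).
\]

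Now I would use that $\sigma$ is a pullback. Since $\rho(Z) \subseteq V$, the morphism $\rho|_Z$ factors as $Z \xrightarrow{\,g\,} V \hookrightarrow X$, so $(\rho^*\sigma)|_Z = g^*(\sigma|_V)$. The projection formula for the proper morphism $g$, together with the compatibility of the degree maps (the pushforward to a point of $H^{BM}_0(Z,\mathbb{R})$ factors through $H^{BM}_0(V,\mathbb{R})$ via $g_*$), then shows that $\beta \cdot \rho^*\sigma$ equals the degree of $\sigma|_V \smallfrown g_*\xi$, where $g_*\xi \in H^{BM}_{2n-2k}(V, \mathbb{R})$. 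To finish: $V$ has codimension at least $k+1$ in the $n$-dimensional $X$, so $\dim_{\mathbb{C}} V \le n-k-1$ and hence $2n-2k > 2\dim_{\mathbb{C}} V$; since the Borel--Moore homology of a complex analytic set vanishes above twice its complex dimension, $H^{BM}_{2n-2k}(V, \mathbb{R}) = 0$. Therefore $g_*\xi = 0$, so $\beta \cdot \rho^*\sigma = 0$, i.e.\ $\alpha_1 \cdot \rho^*\sigma = \alpha_2 \cdot \rho^*\sigma$.

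The part that needs care is the construction in the second paragraph: one must set up the cap product with supports $H^{2k}(\widetilde X, \widetilde X^\circ) \otimes H^{BM}_{2n}(\widetilde X) \to H^{BM}_{2n-2k}(Z)$ and verify its compatibility with the forgetful maps to $H^\bullet(\widetilde X)$ and $H^{BM}_\bullet(\widetilde X)$, with cup products, and with proper pushforward along $g$. These are standard properties of Borel--Moore homology of locally compact spaces (see \cite[Chapter 19]{Fulton} and the references therein), so the only real issue is careful bookkeeping with supports; the dimension vanishing $H^{BM}_m(V,\mathbb{R}) = 0$ for $m > 2\dim_{\mathbb{C}} V$ that drives the conclusion is elementary.
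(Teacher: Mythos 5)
Your proposal is correct and follows essentially the same route as the paper: reduce to $\beta=\alpha_1-\alpha_2$, lift it to a relative class supported on $\rho^{-1}(V)$, cap with the fundamental class to land in (Borel--Moore) homology of degree $2n-2k$ supported over $V$, and kill the pushforward by the dimension bound $\dim_{\mathbb C}V\le n-k-1$ together with the projection formula. The only difference is that you spell out the support bookkeeping via the factorization $Z\to V\hookrightarrow X$, whereas the paper applies the projection formula for $\rho$ directly; this is a cosmetic variation, not a different argument.
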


\begin{proof} 
Let $E = \rho^{-1}(V)$. 
Then there is some class $\delta \in H^{2k}(\widetilde{X},\widetilde{X}\setminus E, \mathbb{R})$ such that 
  $\alpha_1 - \alpha_2$
is equal to the image of $\delta$ in $H^{2k}(\widetilde{X},\mathbb{R})$. 
It follows that 
\begin{eqnarray*} 
    (\alpha_1 - \alpha_2 ) \cdot \rho^*\sigma  
         &=& \delta \cdot \rho^*\sigma  \\
         &=&  (\rho^*\sigma \smallsmile \delta)  \smallfrown [\widetilde{X}] \\ 
         &=& \rho^*\sigma \smallfrown (\delta \smallfrown [\widetilde{X}]).
\end{eqnarray*}
We note that $\delta \smallfrown [\widetilde{X}] \in H_{2n-2k}(E, \mathbb{R})$. 
Then $\rho_*(\delta \smallfrown [\widetilde{X}]) \in H_{2n-2k}(V, \mathbb{R})$. 
By assumption, we have $\dim V \le n-k-1$. 
Hence $\rho_*(\delta \smallfrown [\widetilde{X}]) = 0$. 
By the projection formula,   we have 
\[
 \rho^*\sigma \smallfrown (\delta \smallfrown [\widetilde{X}]) = \sigma \smallfrown \rho_*(\delta \smallfrown [\widetilde{X}]) =0.
\]
This completes the proof of the lemma. 
\end{proof}

We are in position to prove the following statement. 

\begin{prop}
\label{prop:Chern}
The intersection numbers in Definition \ref{defn:Chern} are independent of the choice of $Z$. 
\end{prop}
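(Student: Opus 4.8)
**The plan is to show that any two choices of the modification $q\colon Z \to X$ give the same intersection numbers by comparing both to a common dominating modification.**

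The plan is to prove independence by a standard "dominate and compare" argument. Suppose $q\colon Z \to X$ and $q'\colon Z' \to X$ are two modifications constructed as in Section 2, each equipped with its locally free orbifold sheaf $\mathcal{H}_{\orb}$, resp. $\mathcal{H}'_{\orb}$, and each an isomorphism over a common Zariski open $X^\circ \subseteq X$ whose complement has codimension at least $3$. First I would take a third modification $W$ dominating both: choose an orbifold modification $r\colon W \to X$ (again an isomorphism over $X^\circ$) together with projective bimeromorphic morphisms $g\colon W \to Z$ and $g'\colon W \to Z'$ commuting with the maps to $X$; concretely one resolves the closure of the graph of $Z \dashrightarrow Z'$ and then applies \cite[Theorem 3.10]{DasOu2023} on each orbifold chart to make the pulled-back sheaf locally free, as in the construction preceding Definition \ref{defn:Chern}. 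By transitivity of "$\leq$" it suffices to treat the case where $q'$ factors as $q \circ g$ with $g\colon Z' \to Z$, so that $W = Z'$.

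In that case I would compare $\hat{c}_k(\mathcal{H}'_{\orb}) \in H^{2k}(Z',\mathbb{R})$ with $g^*\hat{c}_k(\mathcal{H}_{\orb})$, and similarly for the squares of first Chern classes. The key point is that both $\mathcal{H}'_{\orb}$ and the orbifold pullback $g^*\mathcal{H}_{\orb}$ restrict to the \emph{same} orbifold vector bundle over the open set lying above $X^\circ$ — indeed both agree with the orbifold bundle $\mathcal{F}_{\orb}$ pulled back there, since all the intermediate modifications are isomorphisms over $X^\circ$ and the sheaves are obtained by reflexive/torsion-free pullback, which is compatible with restriction to open sets. By functoriality of orbifold Chern classes under pullback, $g^*\hat{c}_k(\mathcal{H}_{\orb}) = \hat{c}_k(g^*\mathcal{H}_{\orb})$, and since $\hat c_k(\mathcal{H}'_{\orb})$ and $\hat c_k(g^*\mathcal{H}_{\orb})$ agree on $Z' \setminus (q')^{-1}(X \setminus X^\circ)$, with $X \setminus X^\circ$ of codimension at least $3 \geq k+1$ for $k = 1, 2$, Lemma \ref{lemma:c2-property} (applied with $\rho = q'$, $V = X \setminus X^\circ$, $\alpha_1 = \hat c_k(\mathcal{H}'_{\orb})$, $\alpha_2 = g^*\hat c_k(\mathcal{H}_{\orb})$) gives
\[
\hat{c}_k(\mathcal{H}'_{\orb}) \cdot (q')^*\sigma = g^*\hat{c}_k(\mathcal{H}_{\orb}) \cdot (q')^*\sigma = \hat{c}_k(\mathcal{H}_{\orb}) \cdot g_*\big((q')^*\sigma\big) = \hat{c}_k(\mathcal{H}_{\orb}) \cdot q^*\sigma,
\]
using the projection formula and $g_*(q')^* = g_*g^*q^* = q^*$ since $g$ is bimeromorphic (so $g_*g^* = \id$ on cohomology). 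The same computation with $\hat c_1(\mathcal{H}_{\orb})^2$ in place of $\hat c_k(\mathcal{H}_{\orb})$ handles the $\hat{c}_1(\mathcal{F})^2$ case, noting $g^*(\hat c_1(\mathcal{H}_{\orb})^2) = (g^*\hat c_1(\mathcal{H}_{\orb}))^2$ and that it agrees with $\hat c_1(\mathcal{H}'_{\orb})^2$ over the good locus. This proves $\hat{c}_k(\mathcal{F}) \cdot \sigma$ and $\hat{c}_1(\mathcal{F})^2 \cdot \sigma$ are the same whether computed on $Z$ or $Z'$.

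The main obstacle I anticipate is not the formal structure above but the bookkeeping needed to produce the common dominating modification $W$ \emph{within the class of modifications allowed in the construction} — i.e. verifying that resolving the graph and invoking \cite[Theorem 3.10]{DasOu2023} chart-by-chart indeed yields a $\mathfrak{Z}$-type object with a locally free orbifold sheaf and with indeterminacy locus of $q^{-1}$ of codimension at least $3$, and that the two comparison maps $g, g'$ are genuinely $G$-equivariant and descend. Once $W$ is in hand, the cohomological comparison is exactly Lemma \ref{lemma:c2-property} plus the projection formula, so the remaining work is routine.
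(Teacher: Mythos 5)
Your argument is correct in its essential mechanics and rests on exactly the same two ingredients as the paper's proof: pulling both classes back to a space dominating the two modifications, and then combining Lemma \ref{lemma:c2-property} (agreement over the preimage of $X^\circ$, whose complement has codimension $\ge 3$) with the projection formula for a bimeromorphic map. The place where you diverge is the reduction step, and it is precisely the step you flag as "the main obstacle": you insist that the dominating space $W$ be itself a valid modification in the sense of Definition \ref{defn:Chern}, i.e.\ that it carry its own locally free orbifold sheaf and be an isomorphism over $X^\circ$, so that "transitivity" applies. This is not routine: the closure $\widetilde{X}$ of the graph of $Z\dashrightarrow Z'$ can acquire bad singularities along the exceptional locus (which may be a divisor), so it need not have quotient singularities in codimension~2, and a resolution of $\widetilde{X}$ would in general also modify the orbifold locus over $X^\circ$, destroying the codimension-$3$ condition on the indeterminacy of $q^{-1}$. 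So as written, the reduction to the factoring case has a genuine gap.

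The fix is that this reduction is unnecessary, and your own computation already contains the correct argument. Lemma \ref{lemma:c2-property} applies to \emph{arbitrary} cohomology classes on \emph{any} proper bimeromorphic $\rho\colon\widetilde{X}\to X$; the classes do not need to be Chern classes of a sheaf living on $\widetilde{X}$. So take $\widetilde{X}$ to be the bare graph closure with projections $p_1\colon\widetilde X\to Z$, $p_2\colon\widetilde X\to Z'$ and $\rho\colon\widetilde X\to X$, and set $\alpha_1=p_1^*\hat c_k(\mathcal{H}_{\orb})$, $\alpha_2=p_2^*\hat c_k(\mathcal{H}'_{\orb})$. These agree over $\rho^{-1}(X^\circ)$ because $p_1,p_2$ are isomorphisms there and both bundles restrict to $\mathcal{F}_{\orb}$; the projection formula gives $p_j^*\hat c_k\cdot\rho^*\sigma=\hat c_k\cdot q_j^*\sigma$ since $(p_j)_*[\widetilde X]=[Z_j]$; and Lemma \ref{lemma:c2-property} closes the loop. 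This is exactly the paper's proof, and it is your factoring-case computation run on $\widetilde{X}$ without ever equipping $\widetilde{X}$ with a sheaf.
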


\begin{proof}
We assume the following setting. 
Let  $q_j\colon Z_j \to X_j$ be proper bimeromorphic morphisms with $j=1,2$. 
There is a closed analytic subset  $V \subseteq X$ of codimension at least 3,  such that  $X^\circ := X\setminus V$ has quotient singularities.  
The restriction of the  reflexive coherent sheaf $\mathcal{F}$ on $X^\circ$ induces an orbifold vector bundle $\mathcal{F}_{\orb}$ on the standard orbifold structure of $X^\circ$. 
Let $Z_j^\circ = q_j^{-1}(X^\circ)$. Then $q_j$ is an isomorphism on $Z_j^\circ$. 
There is an orbifold $\mathfrak{Z}_j$ whose quotient space is equal to $Z_j$, such that $\mathfrak{Z}_j$ is standard over $Z_j^\circ$. 
There is an orbifold vector bundle $\mathcal{H}_{\orb, j}$ on $\mathfrak{Z}_j$, whose restriction over $Z_j^\circ$
is isomorphic to $\mathcal{F}_{\orb}$ over  $X^\circ$. 
Let $\alpha_j \in \{ \hat{c}_1(\mathcal{H}_{\orb,j}), \hat{c}_2(\mathcal{H}_{\orb,j}), \hat{c}_1(\mathcal{H}_{\orb,j})^2 \}$, such that $\alpha_1$ and $\alpha_2$ are the same type of characteristic class. 

To prove the proposition, it  is enough to show that, for any $\sigma \in H^{\bullet}(X,\mathbb{R})$, the following equality holds  
\[
\alpha_1 \cdot q_1^*\sigma  = \alpha_2 \cdot q_2^*\sigma. 
\]
We note that there is a natural  bimeromorphic  map  $\varphi\colon Z_1\dashrightarrow Z_2$ over $X$. 
Let $\widetilde{X} \subseteq Z_1\times Z_2$ be the closure of the  graph of $\varphi$,  
and let  $p_1\colon \widetilde{X} \to Z_1$,  $p_2\colon \widetilde{X} \to Z_2$ and $\rho\colon \widetilde{X} \to X$
be the natural morphisms. 
Then we have 
\[
\alpha_j \cdot q_j^*\sigma  = p_j^*\alpha_j \cdot \rho^*\sigma 
\]
for $j=1,2$. 
We notice that $p_1$, $p_2$ and $\rho$ are   isomorphisms over $X^\circ$. 
Hence if $\widetilde{X}^\circ = \rho^{-1}(X^\circ)$, then we have 
\[
(p_1^*\alpha_1)|_{\widetilde{X}^\circ} = (p_2^*\alpha_2)|_{\widetilde{X}^\circ}. 
\]
By applying Lemma \ref{lemma:c2-property}, we obtain that 
$\alpha_1 \cdot q_1^*\sigma  =\alpha_2 \cdot q_2^*\sigma.$
This completes the proof of the proposition. 
\end{proof}

\begin{remark}
\label{rmk:Chern} 
We gather some   properties on this notion of Chern classes, which can be derived directly from the definition and Lemma \ref{lemma:c2-property}. 
\begin{enumerate}
    \item  If $X$ has quotient singularities and if $\mathcal{F}$ induces an orbifold vector bundle $\mathcal{F}_{\orb}$ on the standard orbifold structure over $X$, then the Chern classes of $\mathcal{F}$ in Definition \ref{defn:Chern} coincides with the linear forms on $H^{\bullet}(X,\mathbb{R})$ induced by the orbifold  Chern classes of $\mathcal{F}_{\orb}$. 

    \item Let $X^\circ \subseteq X$ be an open subset with quotient singularities, such that $\mathcal{F}|_{X^\circ}$ induces an orbifold vector bundle $\mathcal{F}_{\orb}$ on the standard orbifold structure on $X^\circ$, 
    and that $X\setminus X^\circ$ has codimension at least 3. 
    Let $\Delta\in \{\hat{c}_1, \hat{c}_2, \hat{c}_1^2\}$. 
    If there is some  class $\beta \in H^{\bullet}(X, \mathbb{R})$, 
    whose restriction on $X^\circ$ is equal to $\Delta(\mathcal{F}_{\orb})$,  
    then $\Delta(\mathcal{F})$ is the same as the linear form on $H^{\bullet}(X, \mathbb{R})$ 
    defined by the intersection with  $\beta$. 
    % In other words, $\Delta(\mathcal{F}_{\orb}) \cdot \sigma = \beta\cdot \sigma$ for any $\sigma\in H^{\bullet}(X, \mathbb{R})$. 

    \item Let $M = H_1\cap \cdots \cap  H_{n-2}$ be the complete intersection surface of basepoint-free Cartier divisors in       general position.  
          Then  $M $   has quotient singularities, $\mathcal{F}|_M$ is reflexive and inducing an orbifold vector bundle on the standard orbifold structure of $M$, and  we have 
          \[
          \Delta(\mathcal{F}) \cdot c_1(H_1)  \cdots  c_1(H_{n-2}) = \Delta(\mathcal{F}|_M),
          \]
          where $\Delta$ is either $\hat{c}_2$ or $\hat{c}_1^2$.

    \item Let $h\colon X'\to X$ be a desingularization and let $\mathcal{F}' = h^*\mathcal{F}/(\mathrm{torsion})$.  Then for any cohomology class $\sigma\in H^\bullet(X,\mathbb{R})$, we can define 
    \[
    c_1(\mathcal{F}) \cdot \sigma := c_1(\mathcal{F'})\cdot  h^*\sigma.
    \] 
    This homology Chern class $c_1(\mathcal{F})$ is independent of the choice of $h$, and we  have 
    \[
    \hat{c}_1(\mathcal{F})\cdot \sigma = c_1(\mathcal{F'})\cdot  h^*\sigma.
    \] 
\end{enumerate} 
\end{remark}

With the first Chern class in Definition \ref{defn:Chern}, we can extend  the notion of slope stability as follows.  
Let $X$ be a compact complex analytic  variety of dimension $n$,  
which has quotient singularities in codimension 2.  
Let $\mathcal{F}$ be a coherent reflexive sheaf on $X$.  
For a cohomology class $\alpha \in H^2(X,\mathbb{R})$, we can define the slope
\[
\mu_{\alpha} (\mathcal{F}) = \frac{\hat{c}_1(\mathcal{F}) \cdot \alpha^{n-1}}{\mathrm{rank}\, F}.
\]
Then $\mathcal{F}$ is called $\alpha$-semistable if for any nonzero coherent  subsheaf $\mathcal{E}\subseteq \mathcal{F}$, we have $\mu_{\alpha} (\mathcal{E}) \le \mu_{\alpha} (\mathcal{F})$. 
It is called $\alpha$-stable if the inequality is strict whenever $\mathcal{E}$ has smaller rank.  
From the item (2) of  Remark  \ref{rmk:Chern}, we see that this definition coincides with the classic one, 
if $X$ satisfies that  every reflexive coherent sheaf of rank 1 on it 
has a positive reflexive power which is locally free.

\section{Orbifold Bogomolov-Gieseker  inequality}

In this section, we will prove Theorem \ref{thm:BG}.  
The notion of K\"ahler spaces was introduced in \cite{Grauert1962}.  
For   K\"ahler orbifolds and orbifold coherent sheaves, see for example  \cite[Section 2]{Faulk2022},  \cite[Section 2]{Wu23} or \cite[Section 3.1]{DasOu2023}. 

%We will need the following  lemma, showing that the stability is an open condition. 

%\begin{lemma}
%\label{lemma:perturbe-stable}
%Let   $\mathfrak{X} =(X_i,G_i)$ be a compact complex orbifold of dimension $n$, such that the actions of the $G_i$'s are faithful.  Let $\mathcal{E}_{\orb} = (\mathcal{E}_i)$ be an orbifold vector bundle on $\mathfrak{X}$.  Let $\omega$ be an orbifold K\"ahler form and $\eta$ a  closed semipositive  orbifold  $(1,1)$-form on $\mathfrak{X}$.  Assume that $\mathcal{E}_{\orb}$  is stable with respect to $\eta$.  Then it is stable with respect to $\eta+ \varepsilon \cdot \omega$ for all $\varepsilon>0$ small enough. 
%\end{lemma}

%\begin{proof}  
%See \cite[Lemma 3.16]{DasOu2023}.
%\end{proof}

%We are now in the position of proving Theorem \ref{thm:BG}. 

%and Theorem \ref{thm:BG-equality}.

\begin{proof}[{Proof of Theorem \ref{thm:BG}}]
Let $f\colon Y\to X$ be an orbifold modification.  
Then $Y$ has quotient singularities. 
It follows that $Y$ is the quotient space of an orbifold  $ \mathfrak{Y} = (Y_i,G_i)_{i\in I}$, 
such that the action of $G_i$ is free in codimension 1. 
Since $f$ is projective, $Y$ is a compact K\"ahler variety.      
Let $\mathcal{E} = (f^*\mathcal{F})/(\mathrm{torsion})$.  
Then $\mathcal{E}$ induces a  torsion-free  coherent orbifold sheaf $\mathcal{E}_{\orb}$ on $\mathfrak{Y}$,  so that on each orbifold chart $\pi_i\colon Y_i\to Y$, we have  $\mathcal{E}_i=(\pi_i^*(\mathcal{E}))^{**}$.   
By shrinking $X^\circ$, we may assume that $f$ is an isomorphism over it.  
We note that $\mathcal{E}$ is stable with respect to $f^*\omega$.  
%As in \cite[Remark 3.4.(5)]{DasOu2023}, we deduce that $\mathcal{E}_{\orb}$ is stable with respect to the class of $f^*\omega$. 

By \cite[Theorem 3.10]{DasOu2023}, there is a functorial projective bimeromorphic morphism $p_i\colon Z_i\to Y_i$, 
such that $Z_i$ is smooth, that $\mathcal{H}_i:= p_i^*\mathcal{E}_i/(\mathrm{torsion})$ is locally free, and that the indeterminacy locus of $p_i^{-1}$ has codimension at least 3. 
In particular, there is a canonical action of $G_i$ on $Z_i$ such that $p_i$ is $G_i$-equivariant, 
and that the $(Z_i,G_i)$'s induce a complex orbifold $\mathfrak{Z}$ with quotient space $Z$.  
The collection $(\mathcal{H}_i)$ defines an orbifold vector bundle $\mathcal{H}_{\orb}$  on $\mathfrak{Z}$.  
There is an induced projective bimeromorphic morphism $p\colon Z \to Y$. 
In particular, $Z$ is a K\"ahler variety.  
Then there is an orbifold K\"ahler form $\eta$ on $\mathfrak{Z}$, see \cite[Lemma 1]{Wu23}. 
Let $q\colon Z\to X$ be the natural morphism. 
We note that $q^*\omega$  can be viewed as  a semipositive orbifold $(1,1)$-form on $\mathfrak{Z}$. 
Moreover,  $\mathcal{H}_{\orb}$ is stable with respect to $q^*\omega$. 
%for $\mathcal{E}_{\orb}$ is stable with respect to  $f^*\omega$.   

\begin{claim}
\label{claim:stable}
For any $\varepsilon>0$ small enough,  
$\mathcal{H}_{\orb}$ is stable with respect to the following orbifold K\"ahler form on $\mathfrak{Z}$  
\[
\alpha_\varepsilon:= q^*\omega + \varepsilon \eta. 
\]   
\end{claim}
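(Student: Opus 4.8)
The plan is to deduce stability with respect to $\alpha_\varepsilon = q^*\omega + \varepsilon\eta$ from the already-established stability of $\mathcal{H}_{\orb}$ with respect to $q^*\omega$ (as a semipositive form), by an openness/perturbation argument applied to the finitely many potentially destabilizing saturated subsheaves. First I would recall that for an $\omega$-stable reflexive sheaf $\mathcal{F}$ of rank $r$, it suffices to test destabilization against saturated subsheaves $\mathcal{G}\subseteq \mathcal{H}_{\orb}$ with $0 < \operatorname{rank}\mathcal{G} < r$; for each such $\mathcal{G}$, the slope inequality $\mu_{q^*\omega}(\mathcal{G}) < \mu_{q^*\omega}(\mathcal{H}_{\orb})$ holds strictly. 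The key point is that one cannot test infinitely many subsheaves directly, so I would invoke a boundedness statement: in the orbifold K\"ahler setting the first Chern classes $\hat{c}_1(\mathcal{G})$, normalized by rank, that can arise from \emph{sub}sheaves of a fixed $\mathcal{H}_{\orb}$ lie in a bounded (indeed finite modulo the relevant numerical equivalence) set once we impose $\mu_{\alpha_\varepsilon}(\mathcal{G}) \ge \mu_{\alpha_\varepsilon}(\mathcal{H}_{\orb})$ for some $\varepsilon$ in a bounded range. This is the orbifold analogue of the classical fact (used in constructing the Harder–Narasimhan filtration and in Grothendieck's boundedness) that slopes of subsheaves are bounded above; in the compact K\"ahler case one argues via a resolution and the fact that, after fixing $\operatorname{rank}\mathcal{G}$, the numerical class $\det\mathcal{G}$ is constrained by $\det\mathcal{G} \hookrightarrow \bigwedge^{\operatorname{rank}\mathcal{G}}\mathcal{H}_{\orb}$ together with the K\"ahler cone geometry.

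Granting boundedness, the argument becomes elementary. Consider the function $\varepsilon \mapsto \mu_{\alpha_\varepsilon}(\mathcal{G}) - \mu_{\alpha_\varepsilon}(\mathcal{H}_{\orb})$, which is a polynomial in $\varepsilon$ of degree at most $n-1$ whose value at $\varepsilon = 0$ is the strictly negative number $\mu_{q^*\omega}(\mathcal{G}) - \mu_{q^*\omega}(\mathcal{H}_{\orb})$. Since there are only finitely many possible leading and lower-order coefficients among the relevant $\mathcal{G}$ (by boundedness of $\hat{c}_1(\mathcal{G})/\operatorname{rank}$), and each such polynomial is negative at $0$, there is a uniform $\varepsilon_0 > 0$ such that all of these polynomials remain strictly negative on $(0,\varepsilon_0)$. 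Hence for every $\varepsilon \in (0,\varepsilon_0)$ and every saturated proper subsheaf $\mathcal{G}$ we have $\mu_{\alpha_\varepsilon}(\mathcal{G}) < \mu_{\alpha_\varepsilon}(\mathcal{H}_{\orb})$, which is exactly stability with respect to $\alpha_\varepsilon$. I would also note that $\alpha_\varepsilon$ is genuinely an orbifold K\"ahler form for $\varepsilon > 0$, being the sum of the orbifold K\"ahler form $\varepsilon\eta$ and the semipositive form $q^*\omega$.

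The main obstacle I expect is making the boundedness step rigorous in the orbifold/analytic category, since the usual proofs (Mehta–Ramanathan, Maruyama, Grothendieck) are phrased projectively; one workaround is to pass to the underlying smooth variety $Z$, where $\mathcal{H}_{\orb}$ corresponds to an honest vector bundle together with orbifold data supported on a normal-crossing divisor, and to quote the compact K\"ahler boundedness of Harder–Narasimhan–type filtrations (cf. Bando–Siu and Toma), checking that the orbifold correction terms to $\hat{c}_1$ contribute only a fixed finite family of shifts. A cleaner alternative, if available in this setting, is to observe that a reflexive sheaf which is stable with respect to a semipositive class $\delta$ lying in the closure of the K\"ahler cone is automatically stable for all K\"ahler classes sufficiently near $\delta$ — this is essentially the statement we want, and it can be isolated as a lemma whose proof is the boundedness argument just sketched. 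In any case the analytic subtleties are confined to this one boundedness input; the rest of the claim is a finite perturbation of strict inequalities.
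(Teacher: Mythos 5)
Your proposal is correct and follows essentially the same route as the paper: the paper also reduces the claim to a uniform slope gap $\mu_{q^*\omega}(\mathcal{F}')\le\mu_\omega(\mathcal{F})-C$ for all proper subsheaves (via the perturbation argument of \cite[Lemma 3.16]{DasOu2023}, which is your ``finitely many polynomials negative at $\varepsilon=0$'' step), and obtains that gap from exactly the boundedness/finiteness input you identify, namely \cite[Lemma 5.7]{Toma2021} applied after passing to a desingularization where $\hat{c}_1$ becomes the usual homological first Chern class.
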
 

Admitting this claim for the time being.  
By \cite[Theorem 1]{Faulk2022}, $\mathcal{H}_{\orb}$ admits an orbifold Hermitian-Einstein metric with respect to $\alpha_{\varepsilon}$. 
Then, as  in   \cite[Theorem 4.4.7]{Kobayashi2014},   we have  
\begin{equation*}\label{eqn:BG}
\Big(2r\hat{c}_2(\mathcal{H}_{\orb}) - (r-1)\hat{c}_1(\mathcal{H}_{\orb})\Big)\cdot [\alpha_\varepsilon]^{n-2}  
\ge 0.
\end{equation*}
The limit of the LHS above, when $\varepsilon$ tends to 0, is equal to 
\begin{eqnarray*}
&& \Big(2r\hat{c}_2(\mathcal{H}_{\orb}) - (r-1)\hat{c}_1(\mathcal{H}_{\orb})\Big)\cdot [ q ^*\omega ]^{n-2} \\
&=& \Big(2r\hat{c}_2(\mathcal{F}) - (r-1)\hat{c}_1(\mathcal{F})^2 \Big) \cdot [\omega]^{n-2}
\end{eqnarray*}   
This completes the proof of the theorem.
\end{proof}

It remains to prove Claim \ref{claim:stable}.

\begin{proof}[{Proof of Claim \ref{claim:stable}}]
By the same argument of \cite[Lemma 3.16]{DasOu2023}, it is enough to show that  
there is a constant $C>0$ such that 
\[
\mu_{q^*\omega}(\mathcal{H'}_{\orb}) \le   \mu_{q^*\omega}(\mathcal{H}_{\orb})  - C
\]
for any proper nonzero coherent orbifold subsheaf $\mathcal{H'}_{\orb} \subseteq \mathcal{H}_{\orb}$. 
It is enough to find a constant $C>0$ so that 
\[
 \mu_{\omega}(\mathcal{F'} ) \le  \mu_{\omega}(\mathcal{F})  - C
\]
for any  proper  coherent subsheaf $\mathcal{F'}$ of $\mathcal{F}.$ 
As pointed out in the item (4) of Remark \ref{rmk:Chern}, the orbifold first Chern class
$\hat{c}_1$ is indeed equal to the homological first Chern class defined by taking desingularizations.   
Hence we can apply \cite[Lemma 5.7]{Toma2021} to show that,
the following set of numbers 
\[\{  \mu_{\omega}(\mathcal{F'} )  \ | \ \mathcal{F'}\subseteq \mathcal{F} \mbox{ and } \mu_{\omega}(\mathcal{F'} ) \ge \mu_{\omega}(\mathcal{F}) -1\}\]
is finite. 
The existence of $C$ then follows from the stability of $\mathcal{F}$.  
This completes the proof of the claim.  
\end{proof}

\bibliographystyle{alpha}
\bibliography{reference}

\end{document}